\newcommand{\C}{\mathbb C}
\newcommand{\z}{\mathbb Z}
\newcommand{\q}{\mathbb Q}
\newtheorem{lem}{Lemma}[section]
\newtheorem{co}[lem]{Corollary}
\newtheorem{thm}[lem]{Theorem}
\newtheorem{prop}[lem]{Proposition}
\newtheorem{qu}[lem]{Question}
\newenvironment{proof}{\textbf{Proof.}}{\newline\hspace*{\fill}{$\Box$}\\}
\begin{document}
\title{Strictly ascending HNN extensions in soluble groups}
\author{J.\,O.\,Button\\
Selwyn College\\
University of Cambridge\\
Cambridge CB3 9DQ\\
U.K.\\
\texttt{jb128@dpmms.cam.ac.uk}}
\date{}
\maketitle
\begin{abstract}
We show that there exist finitely generated soluble groups which
are not LERF but which do not contain strictly ascending HNN
extensions of a cyclic group. This solves Problem 16.2 in the
Kourovka notebook. We further show that there is a finitely
presented soluble group which is not LERF but which does not
contain a strictly ascending HNN extension of a polycyclic group.
\end{abstract}
\section{Introduction}

A group is said to be extended residually finite or {\bf ERF} if every
subgroup is an intersection of finite index subgroups. This is a vast
strengthening of the property of residual finiteness. Any virtually
polycyclic group is ERF, as first proved by Mal'cev. Other groups
can be ERF; for instance see \cite{rrvpa} for recent results, but
not a single other example of a finitely generated ERF group is known.
This is Question 8 in \cite{med}, although it dates back to the paper
\cite{jw} where it is shown that a finitely generated virtually soluble
group which is ERF must be virtually polycyclic. This was reproved in
\cite{alpko} and \cite{rrvic}. Indeed the latter paper extends this
``ERF implies virtually polycyclic'' result to a wider class of finitely
generated groups which includes all finitely generated linear and
all finitely generated elementary amenable groups. To further illustrate
the problem, the class of ERF groups is closed under taking quotients
and subgroups, but no member can contain a non-abelian free group. Thus
a new finitely generated ERF group would not be elementary amenable
but nor could it contain a non-abelian free group, and even residually finite
examples of this are hard to come by.

Another property that is considerably stronger than residual
finiteness but weaker than ERF is that of being locally extended
residual finite or {\bf LERF}. This is when every finitely generated
subgroup is the intersection of finite index subgroups and is
sometimes called subgroup separable (although the phrase 
subgroup separable originally referred to ERF, as can be seen
in older papers). This property is
better suited to finitely generated groups and is useful not just
in group theory but in 3-manifold topology, because it has important
consequences when the group is the fundamental group of a compact
3-manifold and the finitely generated subgroup is that of a surface.
Not all compact 3-manifolds have LERF fundamental group but it is an
open question as to whether hyperbolic 3-manifolds do. However a
classical result of M.\,Hall Jnr. is that finitely generated free
groups are LERF and this was extended to closed surface groups by
P.\,Scott in \cite{sco}. It is clear that the LERF property is closed
under taking subgroups as well as finite index supergroups (for the
latter claim see \cite{rrvpa} Lemma 4.2, which establishes this
for ERF and the proof generalises immediately for LERF). However a
quotient of a LERF group need not be LERF (this follows immediately
for any property which holds for finitely generated free groups but
not for all finitely generated groups).

As for determining that a group $G$ is not LERF, a useful method is
due to Blass and Neumann in \cite{bln}. This says that if $G$ contains
a subgroup $H=\langle t,B\rangle$ which is a strictly ascending HNN
extension of a finitely generated group $B$ with stable letter $t$ then
$G$ is not LERF because in a finite quotient of $G$ we must have $B$ and
$tBt^{-1}<B$ going to conjugate subgroups, thus they are of the same order
and hence equal, so we cannot separate $B$ from $tBt^{-1}$. The strictly
ascending condition means that if $A_1$ and $A_2$ are the associated
subgroups of the HNN extension then one is equal to the base $B$ and the
other is strictly contained in $B$, so that conjugation by $t$ induces an
injective but not surjective endomorphism of $B$.

However we can ask if this always detects the absence of LERF in a finitely
generated group. In particular we can examine soluble groups $S$ and the
subgroups of $S$ which are HNN extensions. The advantage of solubility is
that any HNN extension contained in $S$ is ascending, meaning that at least one
of $A_1$ and $A_2$ is equal to $B$ (as otherwise $S$ would contain a
non-abelian free group). If $A_1=A_2=B$ then we have a semidirect product
$B\rtimes\z$, otherwise if $B$ is finitely generated we can conclude that
$S$ is not LERF.

In \cite{kou} it is asked if a finitely generated soluble group which is
not LERF must contain a strictly ascending HNN extension of a cyclic
group. We show that this is not the case and we show further that there
exists a finitely presented soluble group which is not LERF but which
does not contain a strictly ascending HNN extension of a finitely 
generated abelian group, or even a polycyclic group.

\section{Strictly ascending HNN extensions}

We summarise the facts we will need about ascending and strictly
ascending HNN extensions. If $B$ is a group with an isomorphism
$\theta$ to a subgroup $A$ of $B$ then the {\bf ascending HNN
extension} $G=\langle t,B\rangle$ is formed by adjoining to 
$G*\langle t\rangle$ the relations $tbt^{-1}=\theta(b)$ over all $b\in B$
(or just over a generating set for $B$). This gives rise to the
associated homomorphism $\chi$ of the HNN extension which is defined by
$\chi(t)=1$ and $\chi(B)=0$. If $A=B$ then ker$(\chi)$ is equal to $B$
and $G$ is the semidirect product $B\rtimes_\theta\z$, but if
$tBt^{-1}=A<B$ then ker$(\chi)$ is a strictly ascending union
$\cup_{i=0}^\infty t^{-i}Bt^i$ of subgroups which are all isomorphic to
$B$. This means that the kernel must be infinitely generated; indeed if
$C$ is any finitely generated subgroup of ker $(\chi)$ then it must be
contained in $t^{-i}Bt^i$ for some $i$, and so $C$ is conjugate to a
finitely generated subgroup of $B$.

Any $g\in G$ can be expressed in the form $g=t^{-k}bt^l$ for some $b\in B$
and $k,l\geq 0$, thus $\chi(g)=l-k$. Moreover if $B$ is soluble then so is
ker$(\chi)$ because solubility is a local condition, and also $G$ which is
equal to $\mbox{ker}(\chi)\rtimes_\alpha\z$ 
where $\alpha$ is the automorphism of
ker$(\chi)$ that is induced under conjugation by $t$. A strictly
ascending HNN extension of a cyclic group must imply that the cyclic
group is the integers $\z$ and it will be isomorphic to the soluble
Baumslag-Solitar group $BS(1,m)=\langle x,y|yxy^{-1}=x^m\rangle$ for
$m\neq 0,\pm 1$.

Now suppose that $BS(1,m)\leq G=\langle t,B\rangle$ where $G$ is itself
a strictly
ascending HNN extension. On trying to locate the elements $x,y$ in $G$ we see
by applying $\chi$ that $x\in\mbox{ker}(\chi)$ and so, on conjugating 
$\langle x,y\rangle$ by
an appropriate power of $t$, we have $x\in B$ and $y=t^{-k}bt^l$ for
$b\in B$ and $k,l\geq 0$. But we can further conjugate $BS(1,m)$ by $t^k$
to get an element $x=a$ of infinite order and $y=bt^n$ for $a,b\in B$ 
and $n=l-k$. Thus from the
Baumslag-Solitar relation we must have $\theta^n(a)=b^{-1}a^mb$ if $n\geq 0$
and $\theta^{-n}(b^{-1}a^mb)=a$ otherwise.    

\begin{thm} The soluble group $G=\langle t,\z\times\z\rangle$, which is
an ascending HNN extension formed by taking the endomorphism
$\theta(u)=u^5v^{-1},\theta(v)=u^2$ where $u,v$ is the standard generating
pair for $\z\times\z$, is not LERF but does not contain a strictly 
ascending HNN extension of a cyclic group.
\end{thm}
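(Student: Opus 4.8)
The plan is to treat the two assertions separately, drawing on the analysis recalled just before the statement. First I would record that, writing $\z\times\z$ additively, the endomorphism $\theta$ is the integer matrix $M=\left(\begin{smallmatrix}5&2\\-1&0\end{smallmatrix}\right)$ whose columns are $\theta(u)$ and $\theta(v)$. Since $\det M=2$, the map $\theta$ is injective with image of index $2$, so $G$ is itself a strictly ascending HNN extension of the finitely generated group $B=\z\times\z$; the Blass--Neumann criterion recalled in the introduction then yields at once that $G$ is not LERF. Solubility is immediate: $\mbox{ker}(\chi)=\cup_i t^{-i}Bt^i$ is an increasing union of abelian groups, hence abelian, so $G=\mbox{ker}(\chi)\rtimes\z$ is metabelian.

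For the second assertion, by the remarks preceding the theorem it suffices to show that $G$ contains no copy of $BS(1,m)$ with $|m|\geq 2$. Suppose for contradiction that it does. Applying the reduction given just above the statement, after conjugation we may take $x=a\in B$ of infinite order and $y=bt^n$ with $a,b\in B$ and $n\in\z$. Because $B$ is abelian the conjugating element $b$ disappears from the Baumslag--Solitar relation, which becomes $\theta^n(a)=a^m$ when $n\geq 0$ and $\theta^{-n}(a^m)=a$ when $n<0$. Writing $\mathbf a\in\z^2$ for the nonzero vector corresponding to $a$, these read $M^n\mathbf a=m\mathbf a$ and $m M^{-n}\mathbf a=\mathbf a$ respectively, while the case $n=0$ forces $a^{m-1}=1$, contradicting that $a$ has infinite order.

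Thus in either remaining case a nonzero integer vector $\mathbf a$ would be an eigenvector of some power $M^{\pm p}$ with $p\geq 1$ and rational eigenvalue ($m$ or $1/m$). This is exactly what cannot happen for this $M$: its characteristic polynomial $x^2-5x+2$ has discriminant $17$, so it is irreducible over $\q$ and $M$ has irrational eigenvalues $\lambda,\bar\lambda=\tfrac12(5\pm\sqrt{17})$. The eigenvalues of $M^{\pm p}$ are $\lambda^{\pm p}$ and $\bar\lambda^{\pm p}$, and I would check these stay irrational for every $p\geq 1$: were $\lambda^p$ rational it would be fixed by conjugation, forcing $(\lambda/\bar\lambda)^p=1$, whereas $\lambda/\bar\lambda$ is a positive real different from $1$ and hence not a root of unity. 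So no power $M^{\pm p}$ has a rational eigenvalue, the relation forces $\mathbf a=0$, and the contradiction is complete.

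I expect the one genuinely delicate point to be this persistence of irrationality under taking powers and inverses: the construction is engineered precisely so that $\theta^{\pm n}$ can never scale a lattice vector by an integer, and it is the choice of a matrix with irreducible characteristic polynomial whose eigenvalue ratio is not a root of unity that guarantees this. Everything else---the failure of LERF, the solubility, and the reduction to the two matrix equations---is routine given the machinery already set up.
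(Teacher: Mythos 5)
Your proposal is correct, and while it follows the paper's skeleton up to the decisive arithmetic step, it finishes that step by a genuinely different argument. Both you and the paper use Blass--Neumann for the failure of LERF and the pre-theorem reduction to the equations $\theta^n(a)=a^m$ ($n>0$) or $\theta^{-n}(a^m)=a$ ($n<0$) for some $a\neq 0$ in $\z\times\z$, i.e.\ to a nonzero integer vector being an eigenvector of some power $M^{\pm p}$, $p\geq 1$, with rational eigenvalue. You rule this out by a Galois-theoretic argument: $x^2-5x+2$ is irreducible over $\q$ (discriminant $17$), and $\lambda^{\pm p}$ can never become rational, since rationality would force $(\lambda/\bar\lambda)^p=1$ whereas $\lambda/\bar\lambda$ is a positive real different from $1$, hence not a root of unity. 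The paper instead argues via algebraic integers and size estimates: from $\lambda^n=m$ and $\lambda\mu=2$ it deduces $m\mu^n=2^n$, so $\mu^n$ is a rational algebraic integer, hence an integer dividing $2^n$, whence $1\leq|\mu|\leq 2$ and $|\lambda+\mu|=|2/\mu+\mu|\leq 4$, contradicting the trace $\lambda+\mu=5$; the case $n<0$ is disposed of because $\lambda^{-n}=1/m$ cannot be an algebraic integer for $|m|\geq 2$. Your route buys generality and uniformity: it needs only irreducibility of the characteristic polynomial plus the eigenvalue ratio not being a root of unity (automatic here, the eigenvalues being distinct positive reals), it handles $n>0$ and $n<0$ symmetrically via reciprocals, and it shows $m$ need only be rational rather than an integer; the paper's route uses nothing beyond the notion of algebraic integer but is tailored to the specific determinant $2$ and trace $5$. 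A small additional merit of yours is the explicit treatment of the degenerate case $n=0$ (which forces $a^{m-1}=1$ against $a$ having infinite order), a point the paper leaves tacit by restricting to $n>0$ and $n<0$.
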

\begin{proof} The endomorphism $\theta$ of $\z\times\z$ gives rise to a
matrix with determinant 2, so $\theta$ 
is injective but not surjective. Thus $G$ is soluble, 
but not LERF by the Blass-Neumann result. Now suppose $G$ contains
$BS(1,m)$ for $m\neq 0,\pm 1$. By the comment before the theorem, we have
here that the base $B=\z\times\z$ and so we obtain 
$a\in B-\{0\}$ and $m\in\z-\{0,\pm 1\}$ such that $\theta^n(a)=a^m$ 
for $n>0$ (or $\theta^{-n}(a^m)=a$ for $n<0$). Writing this additively and
regarding $\theta$ as an invertible
linear map of $\mathbb R^2$, we are claiming (in both cases) that
$\theta^n$ has the eigenvalue $m$, so $\theta$ has an
eigenvalue $\lambda\in\C$ where $\lambda^n=m$.

Now the product of the two eigenvalues $\lambda,\mu$ of $\theta$ is 2 and
the sum is 5, so $\lambda$ and $\mu=2/\lambda$ satisfy $x^2-5x+2=0$ and
are algebraic integers. This means that $\mu^n$ is too for $n>0$ and if
$n<0$ then we are done because $\lambda$ being an algebraic
integer would imply that $\lambda^{-n}=1/m$ would be too. But we also
have $m\mu^n=2^n$. This means that $\mu^n$ is in $\q$ thus is an integer
dividing $2^n$. Now we have $1\leq |\mu|\leq 2$ and $|\lambda+\mu|=
|2/\mu+\mu|\leq 4$ but this contradicts $\lambda+\mu=5$.
\end{proof}

Problem 16.2 in \cite{kou} asks whether a finitely generated solvable
group is LERF if and only if it does not contain $BS(1,m)$ for
$m>1$. Thus we see from Theorem 2.1 that the answer is no. 
However this group contains
(and indeed is) a strictly ascending HNN extension of a finitely
generated abelian group, and we might generalise the question by asking
for a finitely generated non-LERF soluble group which does not contain
subgroups of this sort. To make further progress we have the following
proposition which is the main tool we will be using.
\begin{prop}
Suppose that $G$ is a group possessing a homomorphism onto an abelian 
group $A$
with kernel $K$ and $H=\langle t,B\rangle$ is a subgroup of $G$ which
is a strictly ascending HNN extension with base $B$ and stable letter
$t$. Then the subgroup $S=\langle t,B\cap K\rangle$ is also a strictly
ascending HNN extension.
\end{prop}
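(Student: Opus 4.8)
The plan is to show that conjugation by $t$ restricts to an injective, non-surjective endomorphism of $B\cap K$, so that $S$ inherits from $H$ the structure of a strictly ascending HNN extension. Write $\phi\colon G\to A$ for the given homomorphism with $\ker\phi=K$, and let $\theta$ be the injective endomorphism of $B$ given by $\theta(b)=tbt^{-1}$, so that $\theta(B)=tBt^{-1}$ is a \emph{proper} subgroup of $B$ since $H$ is strictly ascending. The single fact about $A$ I would use repeatedly is that, because $A$ is abelian, $\phi(tbt^{-1})=\phi(t)\phi(b)\phi(t)^{-1}=\phi(b)$ for every $b\in B$; in other words $\phi\circ\theta=\phi$ on $B$.

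First I would check that $\theta$ maps $B\cap K$ into itself. For $b\in B\cap K$ we have $\theta(b)\in\theta(B)\subseteq B$, while $\phi(\theta(b))=\phi(b)$ is trivial since $b\in K$, so $\theta(b)\in K$; hence $\theta(b)\in B\cap K$. Thus $\theta$ restricts to an injective endomorphism of $B\cap K$. Since $S=\langle t,B\cap K\rangle$ lies inside the ascending HNN extension $H=\mbox{ker}(\chi)\rtimes\langle t\rangle$, with $\mbox{ker}(\chi)=\bigcup_i t^{-i}Bt^i$, I would deduce that $S$ is itself an ascending HNN extension with base $B\cap K$: the group $L:=\bigcup_i t^{-i}(B\cap K)t^i$ is an ascending union of conjugate copies of $B\cap K$ contained in $\mbox{ker}(\chi)$ and normalised by $t$, and since $\langle t\rangle\cap\mbox{ker}(\chi)=1$ we get $S=L\rtimes\langle t\rangle$ with $\chi$ restricting to a surjection $S\to\z$ having kernel $L$. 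No collapse can occur because $S$ is a subgroup of the genuine HNN extension $H$.

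The crux is to prove that this ascending HNN extension is \emph{strictly} ascending, i.e. that $\theta(B\cap K)\neq B\cap K$, and this is the step I expect to be the main obstacle: passing to the subgroup $B\cap K$ could a priori turn a non-surjective endomorphism into a surjective one. Here I would first record the identity $\theta(B\cap K)=\theta(B)\cap K$. The inclusion $\subseteq$ is immediate from the previous paragraph; for $\supseteq$, if $\theta(b)\in K$ then $\phi(b)=\phi(\theta(b))$ is trivial, forcing $b\in B\cap K$ and hence $\theta(b)\in\theta(B\cap K)$. Now suppose, for contradiction, that $\theta(B\cap K)=B\cap K$. Combining this with the identity gives $B\cap K=\theta(B)\cap K\subseteq\theta(B)$.

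I would finish by using $\phi\circ\theta=\phi$ a final time to force $\theta(B)=B$, contradicting strictness of $H$. Indeed $\phi(\theta(B))=\phi(B)$, so for any $b\in B$ there is $b'\in B$ with $\phi(b)=\phi(\theta(b'))$; then $b\,\theta(b')^{-1}\in K\cap B=B\cap K\subseteq\theta(B)$, whence $b\in\theta(B)$. Thus $B\subseteq\theta(B)$, so $\theta(B)=B$, contradicting the assumption that $H$ is strictly ascending. Therefore $\theta(B\cap K)$ is a proper subgroup of $B\cap K$, and $S=\langle t,B\cap K\rangle$ is a strictly ascending HNN extension, as required.
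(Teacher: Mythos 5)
Your proof is correct and takes essentially the same approach as the paper's: both assume $t(B\cap K)t^{-1}=B\cap K$ and use the fact that $A$ abelian forces $\phi\circ\theta=\phi$ to pull this equality up to $tBt^{-1}=B$, contradicting strictness of $H$ --- your identity $\theta(B\cap K)=\theta(B)\cap K$ merely repackages the paper's element computation with $b_0=tbt^{-1}$ and $c_0\in B\cap K$. Your additional paragraph verifying that $S$ genuinely carries the structure of an ascending HNN extension (via $S=L\rtimes\langle t\rangle$ with $L=\bigcup_i t^{-i}(B\cap K)t^i$) is a point the paper leaves implicit, and including it does no harm.
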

\begin{proof}
We have that $tBt^{-1}<B$. As $K$ is normal in $G$, we certainly have
$tCt^{-1}\leq C$ where $C=B\cap K$. Suppose this is actually equality.
We take any element $b\in B$ and consider $b_0=tbt^{-1}$. This is also in
$B$ and $\theta(b_0)=\theta(b)$ as $A$ is abelian. Thus $bb_0^{-1}\in C$
so by assumption we have $c_0\in C$ with $tc_0t^{-1}=bb_0^{-1}$. Then
$tc_0bt^{-1}=b$ and $c_0b\in B$, giving the contraction $tBt^{-1}=B$.
\end{proof}

\begin{thm}
There exists a finitely presented soluble group which is not LERF but 
which does not contain a strictly ascending HNN extension of a finitely 
generated abelian group.
\end{thm}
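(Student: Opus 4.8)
The plan is to split the two requirements. Non-LERF I would get for free from the Blass--Neumann criterion by building into $G$ a strictly ascending HNN extension $\langle s,D\rangle$ of a finitely generated base $D$; since the forbidden bases are precisely the polycyclic (in particular the abelian) ones, I must take $D$ finitely generated but \emph{not} polycyclic, so that this witness does not itself violate the second requirement. The absence of a strictly ascending HNN extension of a polycyclic group will then be forced entirely by Proposition 2.2. A first remark guides the construction: a metabelian $G$ is unpromising, because applying Proposition 2.2 to $G\to G/G'$ sends a strictly ascending HNN extension with polycyclic base $P$ to one with base $P\cap G'$, which is abelian. So a metabelian example would have to contain no strictly ascending HNN extension of any finitely generated group at all, which for finitely generated metabelian groups essentially forces polycyclicity, and hence LERF. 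This is why I move to derived length three.

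For the construction I would take $G=\langle t,B\rangle$ to be a strictly ascending HNN extension, with stable letter $t$ and associated injective non-surjective endomorphism $\theta$, of a finitely presented metabelian, non-polycyclic base $B$. An ascending HNN extension of a finitely presented group is finitely presented, so $G$ is finitely presented, soluble (of derived length three), and not LERF. The essential point is to choose $B$ and $\theta$ arithmetically: the abelian sections of $G$ are finitely generated modules over a group ring $\z[\Gamma]$, and I want the eigenvalues of the elements of $\Gamma$ acting on these sections to be the roots of a fixed irreducible integer polynomial with no rational-radical roots, in exactly the spirit of the polynomial $x^2-5x+2$ used in Theorem 2.1.

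Now suppose $\langle s,P\rangle\le G$ is a strictly ascending HNN extension with $P$ polycyclic. Then $P$ is infinite, since a finite base would make conjugation by $s$ an automorphism and contradict $sPs^{-1}<P$; thus the Hirsch length $h(P)\ge 1$. I then run Proposition 2.2 repeatedly: first with the abelianisation $G\to G/G'$, which pushes the base into $G'$, and thereafter with the abelianisations of the successive HNN extensions themselves, Proposition 2.2 being applied with the ambient group taken to be the current HNN extension. At each such step the image of the current base $P_i$ in the abelianisation of $\langle s,P_i\rangle$ is $\mathrm{coker}(\theta_s-1)$, where $\theta_s$ is conjugation by $s$; when $\theta_s$ has the eigenvalue $1$ this cokernel is infinite, so $h(P_i)$ strictly drops, and iterating drives the Hirsch length down to $0$, that is, to a finite base, which is the contradiction I want.

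The main obstacle is the ``eigenvalue-hard'' case, in which the base survives as a finitely generated abelian group on which $\theta_s$ has no eigenvalue equal to $1$: here the abelianisation fails to reduce the Hirsch length and the descent stalls. This case is genuine --- the group of Theorem 2.1 is itself exactly such a configuration --- so it cannot be dismissed formally and must instead be excluded by the arithmetic built into $G$. Concretely, I must show that no element of $G$ can act on a finitely generated abelian subgroup as a strictly ascending (index $\ge 2$) endomorphism with such a spectrum: reading the eigenvalues of the action off the $\z[\Gamma]$-module structure, I would re-run the Theorem 2.1 computation uniformly, forcing any putative eigenvalue $\lambda$ to satisfy $\lambda^n\in\q$ for some $n$ and then to violate the size constraints $1\le|\mu|$ and $|\lambda+\mu|\le 4$ that come from the chosen polynomial. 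Establishing this eigenvalue control simultaneously over \emph{all} finitely generated abelian subgroups, rather than just for the single base $B$ as in Theorem 2.1, is the crux of the whole argument, and the step I expect to absorb most of the work.
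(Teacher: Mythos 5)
There is a genuine gap here, and you have named it yourself: the ``eigenvalue-hard'' case is not a loose end of your argument but its entire content, and your proposal neither exhibits a specific group nor proves the uniform spectral control --- ``I would re-run the Theorem 2.1 computation uniformly'' is a hope, not an argument. Your outer skeleton does match the paper's in outline: the paper's example is exactly a strictly ascending HNN extension $G=\langle t,W\rangle$ of a finitely presented metabelian non-polycyclic base (the Baumslag--Remeslennikov group, with $W=\z\wr\z$ and $\theta(s)=s$, $\theta(a_i)=a_ia_{i+1}$), non-LERF comes from Blass--Neumann, Proposition 2.2 performs the reductions, and the paper makes precisely your observation that only finite generation of $B\cap K$ is used, which yields the polycyclic corollary. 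But your plan to finish by eigenvalue arithmetic faces a structural obstacle, not just missing details: after the reductions the surviving base is a finitely generated subgroup of an abelian section of $G$, and in the natural candidates (including the paper's) that section has infinite rank --- here $\z^\infty\cong\z[s^{\pm 1}]$, on which the conjugating elements act by multiplication by Laurent polynomials such as $(1+s)^ks^i$. Such operators have no spectrum in the finite-dimensional sense, so there is no ``fixed irreducible integer polynomial'' whose roots simultaneously control the action on \emph{all} finitely generated abelian subgroups, and the $x^2-5x+2$ computation of Theorem 2.1 has nothing to latch onto.

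What the paper does instead, and what is absent from your proposal, is a valuation-style substitute for eigenvalues. Two applications of Proposition 2.2 --- first with the HNN homomorphism $\chi:G\to\z$ (followed by conjugating the base into $W$, using that $K=\ker\chi$ is the ascending union of the $t^{-i}Wt^i$), then with the $s$-exponent-sum homomorphism --- force the base $B$ into $\z^\infty$. Then, writing $g=t^{-m}as^it^n$ with $gBg^{-1}<B$, one uses the degree $d$ (the length of the support in $\z[s^{\pm 1}]$): conjugation by $t$ raises $d$ by exactly $1$, shifting by $s$ preserves it, and a finitely generated $B\leq\z^\infty$ has elements of maximal and of minimal degree; comparing degrees in the two containments $s^iBs^{-i}<t^{m-n}Bt^{-(m-n)}$ and $t^{n-m}Bt^{-(n-m)}<s^{-i}Bs^i$ kills the cases $n>m$ and $n<m$, while $n=m$ is impossible since $B$ cannot be properly contained in a shift of itself. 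This boundedness-of-degree argument is exactly the uniform control over all finitely generated abelian (indeed polycyclic) subgroups that you flag as the crux, and it needs no spectral data; once these two specific homomorphisms are in play, your iterated-abelianisation Hirsch-length descent is also unnecessary. (A side remark: your claim that a finitely generated metabelian group containing no strictly ascending HNN extension of any finitely generated subgroup is ``essentially polycyclic'' is false --- $\z\wr\z$ is LERF, hence contains no such subgroup by Blass--Neumann, yet is not polycyclic --- though nothing in your argument rests on it.)
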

\begin{proof}
The particular group $G$ we work with is the Baumslag-Remeslennikov
group. This is a finitely presented soluble group which is residually
finite; indeed it is linear over $\mathbb R$. It is formed in the
following way: let $W$ be the wreath product $\z\wr\z$ which can be
thought of as the semidirect product $\z^\infty\rtimes_\alpha\z$,
where $\z^\infty$ is the free abelian group on countably many
generators $a_i$ for $i\in\z$ and $\alpha$ acts as a shift up by 1
so that the stable letter $s$ generating $\z$ satisfies $sa_is^{-1}=
a_{i+1}=\alpha(a_i)$. Now $W$ is finitely generated but not finitely
presented. However we consider the endomorphism $\theta$ of $W$
defined by $\theta(s)=s$ and $\theta(a_i)=a_ia_{i+1}$. This is injective
as any element of $W$ can be expressed uniquely as $as^i$ for $a\in\z^\infty$
and $i\in\z$. However it is not surjective: if we let the degree $d(a)$ of
a non-zero element $a\in\z^\infty$ be the difference in the indices of
the highest non-zero entry of $a$ and the lowest then $d(\theta(a))=
d(a)+1$, and so $a_i$ has no preimage under $\theta$ as $d(a_i)=1$.

We then use $\theta$ to form $G=\langle t,W\rangle$ which is the
strictly ascending HNN extension with base $W$ and stable letter $t$, so that
$twt^{-1}=\theta(w)$ for $w\in W$. Consequently any element of $G$ can be
written as $t^{-m}as^it^n$ for $a\in\z^\infty$, $i\in\z$ which is
uniquely defined and $m,n\geq 0$. 
Clearly $G$ is not LERF because $W$ is finitely generated. 
Let us suppose that $G$ contains $H=\langle\tau,B\rangle$
which is a strictly ascending HNN extension with finitely generated
abelian base $B$ and stable letter $\tau$. Now we have the homomorphism
from $G$ to $\z$ associated to the decomposition of $G$ as $\langle t,W
\rangle$ with kernel $K$. By Proposition 2.2 we can replace $B$ with
$B\cap K$ (which we will henceforth call $B$) because $B\cap K$ must also be
finitely generated abelian, and $\langle\tau,B\cap K\rangle$ becomes the
new $H$. But $K$ is an ascending union and $B$ is finitely generated
so is contained in $t^{-i}Wt^i$ for some $i\geq 0$. We can therefore
replace $H$ with $t^iHt^{-i}=\langle t^i\tau t^{-i},t^iBt^{-i}\rangle$    
which is still a strictly ascending HNN extension where the base 
$t^iBt^{-i}$ (which we now also rename $B$) lies in $W$.

However there is also a homomorphism from $G$ to $\z$ given by the
exponent sum of $s$ in an element $g$ of $G$, and another application
of Proposition 2.2 means we now think of the base $B$ of $H$ as lying in
$W$ with exponent sum of $s$ equal to 0, which means $B$ lies in $\z^\infty$.
Thus we must have some $g\in G$ with $gBg^{-1}<B<\z^\infty$. We let
$g=t^{-m}as^it^n$ as above. Remembering that $t$ commutes with $s$,
$\z^\infty$ is abelian and $t\z^\infty t^{-1}<\z^\infty$, we obtain
\[s^iBs^{-i}<t^{m-n}Bt^{-(m-n)}\mbox{ and }t^{n-m}Bt^{-(n-m)}<s^{-i}Bs^i.\]
We cannot have $n=m$ as $B$ is not strictly contained in $s^{-i}Bs^i$ for
any $i$. If $n>m$ then we use the second formula and take an element $b$
in $B$ which has maximum degree. This exists because $B$ is finitely
generated. Conjugating $b$ by positive powers of $t$ will increase the degree
but shifting $b$ will keep it constant. Therefore $t^{n-m}bt^{-(n-m)}$
cannot lie in $s^{-i}Bs^i$. On the other hand, if $n<m$ then we use the
first formula and take an element $b$ of $B$ with minimum degree. Again
one must exist unless $B=\{0\}$, in which case we do not have strict
containment. Because the degree of all non-zero elements of $B$ must
increase under conjugation by $t^{m-n}$, we see that $s^ibs^{-i}$ cannot
be contained in the right hand side. 
\end{proof}

Note that the use of Proposition 2.2 in the proof of Theorem 2.3 means that
the only point where we used the fact that the finitely 
generated base $B$ was abelian was to conclude
that $B\cap K$ was also finitely
generated. Thus Theorem 2.3 applies without change of proof to show that
$G$ does not contain a strictly ascending HNN extension where the base
is any finitely generated group all of whose subgroups are finitely 
generated. Now the soluble groups with this property are precisely the
polycyclic groups, so we have the following Corollary.
\begin{co}
The Baumslag-Remeslennikov finitely presented soluble group is not LERF
but does not contain a strictly ascending HNN extension of a polycyclic
group.
\end{co}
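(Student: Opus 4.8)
The plan is to obtain this Corollary from Theorem 2.3 essentially for free, by observing that the abelian hypothesis on the base was used in only one place. First I would recall the one fact about polycyclic groups that I need: every subgroup of a polycyclic group is again polycyclic, and in particular finitely generated. (More precisely, the polycyclic groups are exactly the soluble groups satisfying the maximal condition on subgroups, which is why they form the natural class here.) This inheritance property is precisely what substitutes for ``$B$ abelian'' in the argument.

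I would then repeat the reduction of Theorem 2.3 with a polycyclic base in place of an abelian one. Suppose $G$ contains $H=\langle\tau,B\rangle$, a strictly ascending HNN extension with $B$ polycyclic. The two applications of Proposition 2.2 (to the $\langle t,W\rangle$ homomorphism onto $\z$, and then to the $s$-exponent-sum homomorphism onto $\z$) replace $B$ by successive intersections with the relevant kernels, while the intervening conjugation moves the base into $W$. At every stage the new base is a subgroup of a polycyclic group, hence polycyclic and finitely generated, so Proposition 2.2 and the finite-generation requirements of the argument remain valid. This is the step where the polycyclic hypothesis does its work, exactly as anticipated in the remark following Theorem 2.3.

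After these reductions the base lies in $\z^\infty$, so it is finitely generated free abelian, and the degree argument of Theorem 2.3 goes through unchanged: an element of maximal degree (for $n>m$) or minimal degree (for $n<m$) in $B$ contradicts the strict containment forced by the HNN structure, while $n=m$ is impossible since $B$ is never strictly contained in a shift $s^{-i}Bs^i$. Hence no such $H$ exists. I do not expect any genuine obstacle here: the content is carried entirely by Theorem 2.3, and the only thing requiring justification is the standard fact that subgroups of polycyclic groups are finitely generated, so the Corollary follows with no new work.
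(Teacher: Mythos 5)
Your proposal is correct and matches the paper's own argument: the paper likewise observes that the abelian hypothesis in Theorem 2.3 was used only to ensure $B\cap K$ is finitely generated, and that polycyclic groups (precisely the soluble groups with all subgroups finitely generated) inherit this, so the proof of Theorem 2.3 goes through verbatim. Your extra observation that after the reductions the base lands in $\z^\infty$ and is thus abelian anyway is implicit in the paper's ``without change of proof'' remark, so there is nothing genuinely different here.
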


\section{Further comments}

We have seen that there exist finitely generated (and even finitely 
presented) soluble groups $G$ where the failure of $G$ to be LERF
cannot be witnessed by using the Blass-Neumann result to find a
strictly ascending HNN extension with a finitely generated base that is
cyclic, abelian or even polycyclic. But the possibility still remains
that this result will always show the absence of LERF because of the
existence of some strictly ascending HNN extension of an arbitrary
finitely generated subgroup (which of course
will necessarily be soluble).
Thus the following question remains.
\begin{qu}
If $G$ is a finitely generated (or finitely presented) soluble
group which is
not LERF then must it contain a strictly ascending HNN extension of some
finitely generated group?
\end{qu}
One may want to include $G$ being residually finite in the hypothesis
in case of counterexamples which might be so nasty as to be far from
being residually finite (and hence even further from being LERF) 
as well as being unable to contain strictly ascending HNN extensions.
It is remarked in \cite{decrn} after Proposition 3.19 that it would
be interesting to characterise LERF groups amongst finitely generated
soluble groups but this is open even when restricted to metabelian
groups. The Proposition itself shows that the (standard) wreath product
$A\wr B$ is LERF when $A$ is finitely generated abelian and $B=\z$. 
This was extended in \cite{alpa} to when both $A$ and $B$ are finitely
generated abelian. 

We remark though that the answer to the above question when extended
to arbitrary finitely presented groups is a definite no, even in a
class of finitely presented groups which is regarded as generally
well behaved. This is the class of fundamental groups of
3-manifolds, and if the 3-manifold is compact then the fundamental
group is finitely presented. Theorem 4.1 of \cite{memp} states that
the fundamental group of any (not necessarily compact) 3-manifold
cannot contain a strictly ascending HNN extension with finitely
generated base. However there are certainly compact
3-manifolds $M$ whose
fundamental group is not LERF: the first was given in \cite{brks},
giving a residually finite and coherent counterexample $\pi_1(M)$.  

We can give one class of soluble groups where the answer to Question 3.1
is yes: these are the constructible soluble groups first introduced
in \cite{bambi}. A group is
constructible if it has a subgroup of finite index which can be
built up as an HNN extension (or amalgamated free product) where the
base and associated subgroups (or the factors and the amalgamated
subgroup) have previously been built in this way. All constructible
groups are finitely presented, although not vice versa (for instance
the Baumslag-Remeslennikov example in Section 2). If a constructible
group is soluble then, because of the ubiquity of free groups in
HNN extensions and amalgamated free products, we can drop the
amalgamated free product construction without loss. As for HNN
extensions, we may assume that they are all ascending. Consequently
constructible groups that are soluble are especially well behaved:
they are all residually finite and even linear over $\q$,
virtually torsion free and of finite Pr\"{u}fer rank, for instance
see \cite{lenr} Subsection 11.2. Moreover it is
clear why the answer to Question 3.1 is straightforward here. Either
we never use a strictly ascending HNN extension in building our 
group, but then we are always taking finite extensions or
the semidirect product by $\z$ with a group
previously obtained, thus staying in the class of virtually polycyclic groups
(and polycyclic groups if the result is soluble) which are LERF. 
Otherwise we will use a strictly ascending HNN extension
for the first time which means that here the base is polycyclic, 
and this will be contained in our final group.

We remark that the finitely generated subgroups of (virtually)
soluble constructible
groups are precisely the finitely generated, residually finite 
(virtually) soluble
groups of finite Pr\"{u}fer rank. This is Theorem A of \cite{bambi}
without addition of the word virtually. Otherwise suppose that $G$ is
a finitely generated residually finite group of finite Pr\"ufer rank
with a normal index $m$ subgroup $N$ which is soluble, and $C$ is a
soluble constructible group containing $N$. Then, as $N$ and $C$ are
contained in $GL(n,\q)$ for some $n$, we have (by \cite{weh} Lemma 2.3
say) that $G$ is a subgroup of $GL(mn,\q)$ with $G/N$ isomorphic to a
group $P$ of $m\times m$ permutation matrices. Then $G$ embeds in the
subgroup $S$ of $GL(mn,\q)$ which is the extension by $P$ of the group
made up of $m$ diagonal blocks, each with an entry in $C$. But $S$ is
constructible as it has an index $m$ subgroup isomorphic to $C\times
\ldots \times C$ ($m$ times) and the direct product of constructible
groups is constructible by \cite{bambi} Proposition 2(b).

These groups occur in various places: for
instance the finitely generated residually finite groups with 
polynomial subgroup growth are precisely those which are virtually
soluble of finite Pr\"ufer rank, see \cite{ls}. However even if we have
a group where the absence of LERF can be detected because it contains
a strictly ascending HNN extension of a finitely generated group, this
need not be true of its subgroups. For instance
we can take $\pi_1(M)$ above and form the free (or
direct) product $P$ of $\pi_1(M)$ with a strictly ascending HNN
extension of a finitely generated group, whereupon both $P$ and $\pi_1(M)$
fail to be LERF but only $P$ contains such an HNN extension.

\end{document}